\theoremstyle{plain}
\newtheorem{thm}{Theorem}[section]
\newtheorem{lem}[thm]{Lemma}
\theoremstyle{definition}
\newtheorem{defn}{Definition}[section]
\begin{document}

\setcounter {page}{1}
\title{$I$-completeness in Function Spaces}

\author[A.K. Banerjee, A. Banerjee]{ Amar Kumar Banerjee*, Apurba Banerjee\ }
\newcommand{\acr}{\newline\indent}
\maketitle
\address{Department of Mathematics, The University of Burdwan, Burdwan-713104, West Bengal, India.\\
                
           {*\,} Corresponding author\\
					Email: akbanerjee@math.buruniv.ac.in, akbanerjee1971@gmail.com \acr(A.K.Banerjee),\\             apurbabanerjeemath@gmail.com\acr(A.Banerjee) \\}

\maketitle
\begin{abstract}
In this paper we have studied the idea of ideal completeness of function spaces $Y^{X}$ with respect to pointwise uniformity and uniformity of uniform convergence. Further involving topological structure on $X$ we have obtained relationships between the uniformity of uniform convergence on compacta on $Y^{X}$ and uniformity of uniform convergence on $Y^{X}$ in terms of $I$-Cauchy condition and $I$-convergence of a net. Also using the notion of a $k$-space we have given a sufficient condition for $C(X,Y)$ to be ideal complete with respect to the uniformity of uniform convergence on compacta.
\end{abstract}
\author{}
\maketitle
\textbf{Key words and phrases:} Ideal, Filter, Uniform space, $I$-Cauchy condition, $I$-convergence, Ideal completeness.   \\

\textbf {AMS subject classification (2010) :} Primary 54A20; Secondary 40A35, 54E15  \\

\section{\textbf{Introduction}}
The idea of convergence of a real sequence was extended to statistical convergence by H. Fast (\cite{FH}) (see also I.J. Schoenberg \cite{SIJ} ) as follows:   \\
If $\mathbb{N}$ denotes the set of all natural numbers and $K\subset \mathbb{N}$ then $K_{n}$ denotes the set $\{k\in K: k\leq n\}$ and $\left|K_{n}\right|$ stands for the cardinality of the set $K_{n}$. The natural density of the set $K$ is defined by $d(K)=$ lim$_{n\rightarrow \infty}\frac{\left|K_{n}\right|}{n}$, provided the limit exists.  \\
A sequence $\{x_{n}\}$ of points in the real number space is said to be statistically convergent to $x_{0}$ if for arbitrary $\epsilon>0$ the set $K(\epsilon)=\{k\in \mathbb{N}: \left|x_{k} - x_{0}\right|\geq \epsilon\}$ has natural density zero. Lot of works have been done so far on such convergence and its topological consequences after the initial works by T. Salat (\cite{ST}). However if one considers the concept of nets instead of sequences the above approach does not seem to be appropriate because of the absence of any idea of density in an arbitrary directed set. Instead it seems more appropriate to follow the more general approach of ideal convergence \cite{KP}. In \cite{KP} (see also \cite{KPMM}) a generalization of the notion of statistical convergence was proposed as follows:
A subcollection $I\subset 2^{\mathbb{N}}$ is called an ideal if (i) $A,B\in I$ implies $A\cup B\in I$ and (ii) $A\in I$, $B\subset A$ imply $B\in I$. $I$ is called non-trivial ideal if $I\neq \{\Phi\}$ and $\mathbb{N}\notin I$. $I$ is called admissible if it contains all singletons. If $I$ is a proper non-trivial ideal then the family of sets $F(I)=\{M\subset \mathbb{N}: \mathbb{N}\setminus M\in I\}$ is a filter on $\mathbb{N}$ and it is called the filter associated with the ideal $I$ of $\mathbb{N}$. It is easy to check that the family $I_{d}=\{A\subset \mathbb{N}: d(A)=0\}$ forms a non-trivial admissible ideal of $\mathbb{N}$.    \\
A sequence $\{x_{n}\}$ of real numbers is said to be $I$-convergent to $x_{0}\in X$ (in short $x_{0}=I$-lim$_{n\rightarrow \infty}x_{n}$) if $K(\epsilon)\in I$ for each $\epsilon>0$, where $K(\epsilon)=\{k\in \mathbb{N}: \left|x_{k} - x_{0}\right|\geq \epsilon\}$.  \\
Lahiri and Das (\cite{LD2}) extended the idea of $I$-convergence to an arbitrary topological space and observed that few basic properties related to ideal convergence are also preserved like ordinary convergence in a topological space. They also introduced in \cite{LD3} the idea of $I$-convergence of nets in a topological space and examined how far it affects the basic properties. Later Das and Ghosal (\cite{DG}) introduced the idea of $I$-Cauchy nets in a uniform space and formulated two equivalent forms of $I$-Cauchy condition of a net in a uniform space. Also they proved that every $I$-convergent net in a uniform space with respect to the uniform topology satisfies $I$-Cauchy condition. Further they have given a sufficient condition for uniform spaces to be complete in terms of $I$-convergence of $I$-Cauchy nets.  \\
In this paper we have studied the idea of ideal completeness of a uniform space and have shown a sufficient condition for a subspace of product uniform space with respect to the pointwise uniformity to be ideal complete. Again we have given a necessary and sufficient condition for a product uniform space with respect to the uniformity of uniform convergence to be ideal complete. Also we have obtained that if a uniform space $(Y,\mathcal{U})$ is ideal complete then so are $(Y^{X},\mathcal{U}_{u})$ and $(C(X,Y),\mathcal{U}_{u})$, where $X$ is a non-empty set and $(Y^{X},\mathcal{U}_{u})$ is the product uniform space with respect to the uniformity of uniform convergence $\mathcal{U}_{u}$ and $C(X,Y)$ is the space of all continuous functions from $X$ to $Y$.   \\ 
Further involving  topology on $X$ in the function space we have shown separately that a net $\{f_{\lambda}:\lambda\in \Lambda\}$ in $(Y^{X},\mathcal{U}_{k})$ where $\mathcal{U}_{k}$ is the uniformity of uniform convergence on compacta is an $I$-Cauchy net if and only if for each compact subset $K$ of $X$, $\{f_{\lambda}|_{K}:\lambda\in \Lambda\}$ is $I$-Cauchy in the uniformity of uniform convergence on $K$ and the same result has been given in case of $I$-convergence of a net in $(Y^{X},\tau_{\mathcal{U}_{k}})$ where $\tau_{\mathcal{U}_{k}}$ is the topology of uniform convergence on compacta. Finally  applying the idea of a $k$-space and using the preceding results we have shown that if $X$ is a $k$-space and $(Y,\mathcal{U})$ is ideal complete, then $C(X,Y)$ is ideal complete in the uniformity of uniform convergence on compacta.

\section{\textbf{Ideal completeness in function spaces}}
Let $(Y,\tau)$ be a topological space and $X$ be a non-empty set. Let $Y^{X}$ be endowed with the Tychonoff product topology. We say a subcollection $\mathcal{F}\subset Y^{X}$ has the topology of pointwise convergence (or, the pointwise topology) iff it is provided with the subspace topology induced by the Tychonoff product topology on $Y^{X}$.
Let $(Y,\mathcal{U})$ be a uniform space which we will write sometimes simply as $Y$. It may be recalled that  for any point $x$ in a uniform space $(Y,\mathcal{U})$, the collection $\{U[x]: x\in Y\}$ [where $U[x]= \{y\in Y: (x,y)\in U\}$] forms a local neighbourhood base at $x$. The corresponding topology $\tau_{\mathcal{U}}$ is called the uniform topology on $Y$. By an open set in $Y$ we shall always mean an open set in the uniform topology in $Y$.   \\
Let $(D,\geq)$ be a directed set and $I$ be a non-trivial ideal of $D$. A net in $Y$ will be denoted by $\{s_{\alpha}: \alpha\in D\}$ or simply by $\{s_{\alpha}\}$, when there is no confusion about $D$. For $\alpha \in D$ let $M_{\alpha}= \{\beta\in D: \beta\geq \alpha\}$. Then the collection $F_{0}=\{A\subset D: A\supset M_{\alpha},$ for some $\alpha\in D\}$ forms a filter in $D$. Let $I_{0}=\{B\subset D: D\setminus B\in F_{0}\}$. Then $I_{0}$ is a non-trivial ideal of $D$.
\begin{defn}
(\cite{LD3}) A non-trivial ideal $I$ of $D$ will be called $D$-admissible if $M_{\alpha}\in F(I)$ for all $\alpha\in D$.
\end{defn}
\begin{defn}
(\cite{LD3}) A net $\{s_{\alpha}: \alpha\in D\}$ in $(Y,\mathcal{U})$ is said to be $I$-convergent to $x_{0}\in Y$ if for any open set $U$ in $(Y,\tau_{\mathcal{U}})$ containing $x_{0}$, $\{\alpha\in D: s_{\alpha}\notin U\}\in I$.
\end{defn}
\begin{defn}
(\cite{DG}) A net $\{s_{\alpha}: \alpha\in D\}$ in a uniform space $(Y,\mathcal{U})$ is said to be $I$-Cauchy if for any $U\in \mathcal{U}$, there exists a $\beta\in D$ such that $\{\alpha\in D: (s_{\alpha},s_{\beta})\notin U\}\in I$.
\end{defn}  
It is easy to check that when $I=I_{0}$ the definition of $I$-Cauchy condition of a net coincides with the usual Cauchy condition.   \\

We know two equivalent forms of $I$-Cauchy condition of a net in a uniform space which are stated below.        \\

\begin{thm} \cite{DG}
 For a net $\{s_{\alpha}:\alpha\in D\}$ in a uniform space $(Y,\mathcal{U})$ the following conditions are equivqlent:    \\
 $\mathbf{(i)}$  $\{s_{\alpha}:\alpha\in D\}$ is an $I$-Cauchy net.   \\
 $\mathbf{(ii)}$ For every $U\in \mathcal{U}$ there exists $A\in I$ such that $\alpha,\beta\notin A$ implies $(s_{\alpha},s_{\beta})\in U$. \\
 $\mathbf{(iii)}$ For every $U\in \mathcal{U}$, $\{\beta\in D:E_{\beta}(U)\notin I\}\in I$, where $E_{\beta}(U)=\{\alpha\in D:(s_{\alpha},s_{\beta})\notin U\}$.
 
\end{thm}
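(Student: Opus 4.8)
The plan is to prove the equivalence of the three $I$-Cauchy conditions by showing the cyclic chain of implications $(i) \Rightarrow (ii) \Rightarrow (iii) \Rightarrow (i)$, exploiting the defining axioms of a uniformity (symmetry and the existence of square roots $V$ with $V \circ V \subseteq U$) to convert statements about a single reference index $\beta$ into symmetric statements about pairs.

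First I would establish $(i) \Rightarrow (ii)$. Given $U \in \mathcal{U}$, choose a symmetric $V \in \mathcal{U}$ with $V \circ V \subseteq U$. By $(i)$ there is a $\beta \in D$ with $A := \{\alpha \in D : (s_\alpha, s_\beta) \notin V\} \in I$. Now if $\alpha, \gamma \notin A$, then $(s_\alpha, s_\beta) \in V$ and $(s_\gamma, s_\beta) \in V$; using symmetry of $V$ to get $(s_\beta, s_\gamma) \in V$ and composing through $s_\beta$, I obtain $(s_\alpha, s_\gamma) \in V \circ V \subseteq U$. This is exactly the conclusion of $(ii)$ with the same set $A \in I$. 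The direction $(ii) \Rightarrow (i)$ is almost immediate: fixing any $\beta \notin A$ (which exists since $A \in I$ means $A \neq D$ by non-triviality), the set $\{\alpha : (s_\alpha, s_\beta) \notin U\}$ is contained in $A$, hence lies in $I$ by downward closure; so one need not separately prove $(iii)\Rightarrow(ii)$ if the cycle is arranged conveniently.

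Next I would handle $(ii) \Rightarrow (iii)$ and $(iii) \Rightarrow (i)$ to close the loop. For $(iii) \Rightarrow (i)$: the hypothesis says $B := \{\beta : E_\beta(U) \notin I\} \in I$. Since $B \neq D$, pick any $\beta \notin B$; then by definition $E_\beta(U) \in I$, which is precisely the assertion that the net is $I$-Cauchy for this $U$. For $(ii) \Rightarrow (iii)$, given $U$ choose the set $A \in I$ from $(ii)$. I claim $\{\beta : E_\beta(U) \notin I\} \subseteq A$: indeed, if $\beta \notin A$, then for every $\alpha \notin A$ we have $(s_\alpha, s_\beta) \in U$, so $E_\beta(U) = \{\alpha : (s_\alpha, s_\beta) \notin U\} \subseteq A \in I$, giving $E_\beta(U) \in I$ by downward closure. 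Hence $\{\beta : E_\beta(U) \notin I\} \subseteq A \in I$, and downward closure yields $(iii)$.

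The main obstacle, such as it is, lies in $(i) \Rightarrow (ii)$, where the single-sided Cauchy condition must be upgraded to a genuinely symmetric pairwise condition; this is where the uniformity axioms do the real work, and one must be careful to select a symmetric entourage whose square lies in $U$ before invoking $(i)$. The remaining implications are bookkeeping arguments that rest only on the ideal axioms, specifically that $A \in I$ forces $A \neq D$ (so a reference index outside $A$ exists) and that $I$ is closed under taking subsets. I would present the argument as the cycle $(i) \Rightarrow (ii) \Rightarrow (iii) \Rightarrow (i)$, which is the most economical routing and avoids any redundant implication.
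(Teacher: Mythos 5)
Your proof is correct. Note that the paper itself states this theorem without proof, citing it from Das and Ghosal \cite{DG}; your cyclic argument $(i)\Rightarrow(ii)\Rightarrow(iii)\Rightarrow(i)$ is the standard one, with the only substantive step being the passage from the one-sided condition $(i)$ to the symmetric condition $(ii)$ via a symmetric entourage $V$ with $V\circ V\subseteq U$, and the remaining implications resting on downward closure of $I$ and the fact that a member of a non-trivial ideal cannot be all of $D$ (so a reference index outside it exists). This matches the argument in the cited source, so nothing further is needed.
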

Throughout we assume that $\Lambda$ is a directed set, $(Y,\tau)$ is a topological space and $X$ is a non-empty set, $Y^{X}$ is endowed with the Tychonoff product topology and  $\mathcal{F}\subset Y^{X}$ has the pointwise topology (i.e., subspace topology induced by the Tychonoff product topology on $Y^{X}$) unless otherwise stated.
\begin{thm} \cite{BB}
If $\mathcal{F}$ has the pointwise topology, then a net $\{f_{\lambda}: \lambda\in \Lambda\}$ is $I$-convergent to $f$ in $\mathcal{F}$ if and only if the net $\{f_{\lambda}(x): \lambda\in \Lambda\}$ is $I$-convergent to $f(x)$ in $\pi_{x}(\mathcal{F})$ for each $x\in X$ where $I$ is a non-trivial ideal of the directed set $\Lambda$ and $\pi_{x}$ is the $x$-th projection map from $Y^{X}$ onto $Y$.
\end{thm}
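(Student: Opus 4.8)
The plan is to prove the two implications separately, exploiting the fact that the pointwise topology on $\mathcal{F}$ is precisely the subspace topology inherited from the Tychonoff product on $Y^{X}$, together with the two defining closure properties of the ideal $I$: closure under subsets and under finite unions. The forward direction will rest only on continuity of the projections, while the reverse direction is where the ideal structure does the essential work.

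For the forward implication, I would suppose $\{f_{\lambda}\}$ is $I$-convergent to $f$ and fix $x\in X$. Given an open set $V$ in $\pi_{x}(\mathcal{F})$ with $f(x)\in V$, continuity of the projection $\pi_{x}$ makes $U=\pi_{x}^{-1}(V)\cap\mathcal{F}$ an open neighbourhood of $f$ in $\mathcal{F}$. Since every $f_{\lambda}$ lies in $\mathcal{F}$, one has $f_{\lambda}\notin U$ exactly when $f_{\lambda}(x)=\pi_{x}(f_{\lambda})\notin V$, so the index sets $\{\lambda:f_{\lambda}\notin U\}$ and $\{\lambda:f_{\lambda}(x)\notin V\}$ coincide. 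The former lies in $I$ by $I$-convergence of $\{f_{\lambda}\}$, hence so does the latter, which gives $I$-convergence of $\{f_{\lambda}(x)\}$ to $f(x)$ in $\pi_{x}(\mathcal{F})$.

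For the reverse implication, I would assume coordinatewise $I$-convergence and take an arbitrary open set $U$ in $\mathcal{F}$ containing $f$. Since $U$ is open in the subspace of the product, there is a basic open set
$$W=\mathcal{F}\cap\bigcap_{i=1}^{n}\pi_{x_{i}}^{-1}(V_{i})$$
with $f\in W\subset U$, where $x_{1},\dots,x_{n}\in X$ and each $V_{i}$ is open in $Y$ with $f(x_{i})\in V_{i}$. Because $f_{\lambda}\in\mathcal{F}$ for every $\lambda$, the condition $f_{\lambda}\notin W$ is equivalent to $f_{\lambda}(x_{i})\notin V_{i}$ for some $i$, so that
$$\{\lambda:f_{\lambda}\notin W\}=\bigcup_{i=1}^{n}\{\lambda:f_{\lambda}(x_{i})\notin V_{i}\}.$$
Each set on the right belongs to $I$ by the hypothesised $I$-convergence of $\{f_{\lambda}(x_{i})\}$ to $f(x_{i})$ (noting that $f_{\lambda}(x_{i})\in\pi_{x_{i}}(\mathcal{F})$ automatically, so $f_{\lambda}(x_{i})\notin V_{i}\cap\pi_{x_{i}}(\mathcal{F})$ reduces to $f_{\lambda}(x_{i})\notin V_{i}$), and since $I$ is closed under finite unions the union is again in $I$. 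Finally $W\subset U$ gives $\{\lambda:f_{\lambda}\notin U\}\subset\{\lambda:f_{\lambda}\notin W\}\in I$, and closure of $I$ under subsets yields $\{\lambda:f_{\lambda}\notin U\}\in I$, the desired $I$-convergence of $\{f_{\lambda}\}$ to $f$.

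The computations here are essentially bookkeeping; the only points needing care are the reduction of an arbitrary subspace-open neighbourhood of $f$ to a basic product neighbourhood, and the clean translation of membership in $W$ into a finite disjunction of coordinatewise conditions. The genuinely decisive step, and the one replacing the tail/directedness argument used for ordinary net convergence, is the reverse direction's appeal to closure of $I$ under finite unions: this is exactly what allows finitely many coordinate conditions, each individually small in the sense of $I$, to be amalgamated into a single member of $I$.
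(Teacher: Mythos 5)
Your proof is correct. Note, however, that the paper does not actually prove this statement in situ: it observes that $\pi_{x}(f_{\lambda})=f_{\lambda}(x)$ and then defers entirely to Theorem~3.4 of the cited reference \cite{BB}, which is the corresponding statement for $I$-convergence of nets in a Tychonoff product. You instead supply the full, self-contained argument: the forward direction via continuity of the restricted projections together with closure of $I$ under subsets, and the reverse direction by passing from an arbitrary subspace-open neighbourhood to a basic one and amalgamating the finitely many coordinatewise exceptional sets using closure of $I$ under finite unions. This is almost certainly the same mechanism that underlies the cited theorem, so the mathematics agrees; what your version buys is that the result is verified directly rather than by citation, and you correctly handle the two small points the one-line proof glosses over, namely that openness in $\pi_{x}(\mathcal{F})$ versus openness in $Y$ makes no difference because $f_{\lambda}(x)$ always lies in $\pi_{x}(\mathcal{F})$, and that the reduction to basic open sets is legitimate because $I$ is closed under subsets. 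No gaps.
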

\begin{proof}
Since $\pi_{x}(f_{\lambda})=f_{\lambda}(x)$ for $x\in X$, the proof follows from Theorem 3.4(\cite{BB}).
\end{proof}
We turn now to the discussion of defining a uniformity on the product of uniform spaces, subject to the obvious restriction that the topology of such a uniformity should be the product topology.   \\
First we recall the following definition.
\begin{defn} (\cite{SW})
If $X_{\alpha}$ is a set for each $\alpha\in \mathcal{A}$ and $X=\prod{X_{\alpha}}$, the $\alpha$th biprojection is the map $P_{\alpha}: X\times X\rightarrow X_{\alpha}\times X_{\alpha}$ defined by $P_{\alpha}(x,y)=(\pi_{\alpha}(x),\pi_{\alpha}(y))$, where $\pi_{\alpha}$ is the $\alpha$th projection mapping from $X$ onto $X_{\alpha}$.
\end{defn}
\begin{thm} \cite{SW}
If $\mathcal{U}_{\alpha}$ is a diagonal uniformity on $X_{\alpha}$, for each $\alpha\in \mathcal{A}$, then the sets $P_{\alpha_{1}}^{-1}(U_{\alpha_{1}})\cap P_{\alpha_{2}}^{-1}(U_{\alpha_{2}})\cap\cdots \cap P_{\alpha_{n}}^{-1}(U_{\alpha_{n}})$, where $U_{\alpha_{i}}\in \mathcal{U}_{\alpha_{i}}$, for $i=1,2,\ldots,n$, form a base for a uniformity $\mathcal{U}_{p}$ on $\prod{X_{\alpha}}$ which is called the product uniformity on $\prod{X_{\alpha}}$ and whose associated topology is the product topology on $\prod{X_{\alpha}}$.  
\end{thm}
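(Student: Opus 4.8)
The plan is to verify directly that the indicated collection
\[
\mathcal{B}=\Big\{P_{\alpha_{1}}^{-1}(U_{\alpha_{1}})\cap\cdots\cap P_{\alpha_{n}}^{-1}(U_{\alpha_{n}}):\ n\in\mathbb{N},\ \alpha_{i}\in\mathcal{A},\ U_{\alpha_{i}}\in\mathcal{U}_{\alpha_{i}}\Big\}
\]
satisfies the standard axioms for a base of a diagonal uniformity on $X=\prod X_{\alpha}$, and then to identify the associated topology $\tau_{\mathcal{U}_{p}}$ with the product topology by comparing neighbourhood bases at an arbitrary point. Throughout I write $\Delta$ for the diagonal of $X\times X$ and $\Delta_{\alpha}$ for that of $X_{\alpha}\times X_{\alpha}$, and I use repeatedly that, since each $\mathcal{U}_{\alpha}$ is a uniformity, every $U_{\alpha}\in\mathcal{U}_{\alpha}$ contains $\Delta_{\alpha}$, satisfies $U_{\alpha}^{-1}\in\mathcal{U}_{\alpha}$, and admits some $V_{\alpha}\in\mathcal{U}_{\alpha}$ with $V_{\alpha}\circ V_{\alpha}\subseteq U_{\alpha}$.

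The whole argument rests on two elementary identities for the biprojections, which I would record first. From $P_{\alpha}(x,y)=(\pi_{\alpha}(x),\pi_{\alpha}(y))$ one checks at once that $\big(P_{\alpha}^{-1}(U)\big)^{-1}=P_{\alpha}^{-1}(U^{-1})$ and that $P_{\alpha}^{-1}(V)\circ P_{\alpha}^{-1}(V)\subseteq P_{\alpha}^{-1}(V\circ V)$. With these the base conditions follow. For the diagonal: if $(x,x)\in\Delta$ then $P_{\alpha_{i}}(x,x)\in\Delta_{\alpha_{i}}\subseteq U_{\alpha_{i}}$, so every member of $\mathcal{B}$ contains $\Delta$. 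Symmetry follows from the first identity together with $U_{\alpha_{i}}^{-1}\in\mathcal{U}_{\alpha_{i}}$, which shows the inverse of a basic set is again basic. For the composition condition, given $B=\bigcap_{i}P_{\alpha_{i}}^{-1}(U_{\alpha_{i}})$, I would choose $V_{\alpha_{i}}$ with $V_{\alpha_{i}}\circ V_{\alpha_{i}}\subseteq U_{\alpha_{i}}$ and set $V=\bigcap_{i}P_{\alpha_{i}}^{-1}(V_{\alpha_{i}})\in\mathcal{B}$; then, working one index at a time, monotonicity of the relation product and the second identity give $V\circ V\subseteq P_{\alpha_{i}}^{-1}(V_{\alpha_{i}}\circ V_{\alpha_{i}})\subseteq P_{\alpha_{i}}^{-1}(U_{\alpha_{i}})$ for each $i$, whence $V\circ V\subseteq B$. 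Finally the filter-base condition is immediate, since the intersection of two basic sets is again a finite intersection of biprojection pullbacks, factors sharing an index being merged via $U_{\alpha}\cap U_{\alpha}'\in\mathcal{U}_{\alpha}$.

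To identify the topology, I would compute the slice of a basic entourage at a point $x\in X$. Using $P_{\alpha}^{-1}(U_{\alpha})[x]=\pi_{\alpha}^{-1}\big(U_{\alpha}[\pi_{\alpha}(x)]\big)$ one gets $B[x]=\bigcap_{i}\pi_{\alpha_{i}}^{-1}\big(U_{\alpha_{i}}[\pi_{\alpha_{i}}(x)]\big)$, and as $B$ ranges over $\mathcal{B}$ these sets form a neighbourhood base at $x$ for $\tau_{\mathcal{U}_{p}}$. Since $\{U_{\alpha}[\pi_{\alpha}(x)]:U_{\alpha}\in\mathcal{U}_{\alpha}\}$ is a neighbourhood base at $\pi_{\alpha}(x)$ in the uniform topology of $X_{\alpha}$, these slices mutually refine the standard basic product neighbourhoods $\bigcap_{i}\pi_{\alpha_{i}}^{-1}(O_{i})$ of $x$; the two neighbourhood filters therefore coincide, so $\tau_{\mathcal{U}_{p}}$ equals the product topology.

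I expect the only genuinely delicate point to be the composition axiom, specifically the interplay between monotonicity of the relation product and the containment $P_{\alpha}^{-1}(V)\circ P_{\alpha}^{-1}(V)\subseteq P_{\alpha}^{-1}(V\circ V)$: the correct move is to estimate $V\circ V$ through one index $\alpha_{i}$ at a time and only then intersect, rather than attempting to split a composite in $X\times X$ into all coordinates simultaneously. Everything else—the diagonal, symmetry, filter-base, and topology steps—reduces to routine unwinding of the definitions of $P_{\alpha}$ and of the slice $U[x]$.
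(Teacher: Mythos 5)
Your proof is correct and complete: the two biprojection identities $\bigl(P_{\alpha}^{-1}(U)\bigr)^{-1}=P_{\alpha}^{-1}(U^{-1})$ and $P_{\alpha}^{-1}(V)\circ P_{\alpha}^{-1}(V)\subseteq P_{\alpha}^{-1}(V\circ V)$ do all the work for the base axioms, and the slice computation $P_{\alpha}^{-1}(U_{\alpha})[x]=\pi_{\alpha}^{-1}\bigl(U_{\alpha}[\pi_{\alpha}(x)]\bigr)$ correctly identifies the uniform topology with the product topology. The paper states this theorem without proof, citing Willard, and your argument is precisely the standard verification given there, so there is no divergence to report.
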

Now let us assume that $(Y,\mathcal{U})$ is a uniform space.
\begin{defn}
(\cite{SW}) The product uniformity $\mathcal{U}_{p}$ on $Y^{X}$ is called the uniformity of pointwise convergence or the pointwise uniformity.
\end{defn}
Note that the topology associated with the pointwise uniformity on $Y^{X}$ is, of course, the pointwise topology.    \\
 
\begin{thm}
$\{f_{\lambda}: \lambda\in \Lambda\}$ is an $I$-Cauchy net in $Y^{X}$ with the pointwise uniformity if and only if $\{f_{\lambda}(x): \lambda\in \Lambda\}$ is an $I$-Cauchy net in $Y$ for each $x\in X$, where $I$ is a non-trivial ideal of $\Lambda$. 
\end{thm}
\begin{proof}
Let $\{f_{\lambda}: \lambda\in \Lambda\}$ be an $I$-Cauchy net in $Y^{X}$ with the pointwise uniformity. Then for every member of $\mathcal{U}_{p}$ of the form $P_{x}^{-1}(U)$ where $U\in \mathcal{U}$ there exists $B\in I$ such that $\alpha,\beta\notin B$ implies $(f_{\alpha},f_{\beta})\in P_{x}^{-1}(U)$, i.e., $P_{x}(f_{\alpha},f_{\beta})\in U$, i.e., $(f_{\alpha}(x),f_{\beta}(x))\in U$ and hence by Theorem 2.1 it follows that $\{f_{\alpha}(x): \alpha\in \Lambda\}$ is an $I$-Cauchy net in $Y$ for each $x\in X$.  \\
Conversely suppose $\{f_{\lambda}(x): \lambda\in \Lambda\}$ is an $I$-Cauchy net in $Y$ for each $x\in X$. Hence by Theorem 2.1 for each $U\in \mathcal{U}$ and for each $x\in X$ there exists $A_{0}\in I$ such that $\alpha,\beta\notin A_{0}$ implies $(f_{\alpha}(x),f_{\beta}(x))\in U$. Let us choose a member $U\in \mathcal{U}_{p}$. Then $U$ has the form $P_{x_{1}}^{-1}(U_{1})\cap P_{x_{2}}^{-1}(U_{2})\cap\cdots \cap P_{x_{n}}^{-1}(U_{n})$ where $U_{i}\in \mathcal{U}$ for all $i=1,2,....,n$ and $x_{1},x_{2},....,x_{n}\in X$. Now for $U_{1},U_{2},\ldots,U_{n}\in \mathcal{U}$ and $x_{1},x_{2},\ldots,x_{n}\in X$ there exist $A_{1},A_{2},\ldots,A_{n}\in I$ such that $\lambda_{1},\lambda_{2}\notin A_{i}$ implies $(f_{\lambda_{1}}(x_{i}),f_{\lambda_{2}}(x_{i}))\in U_{i}$ for $i=1,2,\ldots,n$. Let $A=\bigcup_{i=1}^{n}A_{i}$. Then $A\in I$. Now $\alpha,\beta\notin A$ implies $(f_{\alpha}(x_{i}),f_{\beta}(x_{i}))\in U_{i}$ for each $i=1,2,\ldots,n$, which in turn implies that $P_{x_{i}}(f_{\alpha},f_{\beta})\in U_{i}$ for each $i=1,2,\ldots,n$ i.e., $(f_{\alpha},f_{\beta})\in P_{x_{i}}^{-1}(U_{i})$ for each $i=1,2,\ldots,n$. Thus $\alpha,\beta\notin A$ implies $(f_{\alpha},f_{\beta})\in \bigcap_{i=1}^{n}P_{x_{i}}^{-1}(U_{i})=U\in \mathcal{U}_{p}$. Hence again applying Theorem 2.1 we get that $\{f_{\lambda}: \lambda\in \Lambda\}$ is an $I$-Cauchy net in $Y^{X}$.
\end{proof} 
We define below the notion of ideal completeness of a uniform space in the same manner as that of a uniform space to be complete.
\begin{defn}
A uniform space $(Y,\mathcal{U})$ is said to be ideal complete if every net $\{s_{\alpha}: \alpha\in \Lambda\}$ in $(Y,\mathcal{U})$ which is $I$-Cauchy in $(Y,\mathcal{U})$, is $I$-convergent in $(Y,\tau_{\mathcal{U}})$, where $I$ is a non-trivial ideal of $\Lambda$ and $\tau_{\mathcal{U}}$ is the uniform topology on $Y$ corresponding to the uniformity $\mathcal{U}$ on $Y$. 
\end{defn}
\begin{thm}
Let $\mathcal{F}\subset Y^{X}$ be a function space with the pointwise uniformity. Let $\Lambda$ be a directed set and $I$ be a non-trivial ideal of $\Lambda$. Then $\mathcal{F}$ is ideal complete if   \\
$\mathbf{(a)}$ $\mathcal{F}$ is pointwise closed (i.e., $\mathcal{F}$ is closed in the pointwise topology on $Y^{X}$),   \\
$\mathbf{(b)}$ $\pi_{x}(\mathcal{F})=\{f(x):f\in \mathcal{F}\}$ is ideal complete in $Y$ for each $x\in X$. 
\end{thm}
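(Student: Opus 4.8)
The plan is to verify the definition of ideal completeness directly: start with an arbitrary net $\{f_{\lambda}:\lambda\in \Lambda\}$ in $\mathcal{F}$ that is $I$-Cauchy with respect to the pointwise uniformity, manufacture a candidate limit $g\in \mathcal{F}$ by working coordinatewise, and then show the net $I$-converges to $g$. First I would pass from the $I$-Cauchy condition on $\mathcal{F}$ to a coordinatewise one. Since the pointwise uniformity on $\mathcal{F}$ is simply the restriction of $\mathcal{U}_{p}$ to $\mathcal{F}\times \mathcal{F}$ and the entire net lies in $\mathcal{F}$, the net is equally $I$-Cauchy as a net in $Y^{X}$; hence by the preceding theorem (Theorem 2.4) each evaluated net $\{f_{\lambda}(x):\lambda\in \Lambda\}$ is $I$-Cauchy in $Y$. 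Because every $f_{\lambda}(x)$ lies in $\pi_{x}(\mathcal{F})$, it is in fact $I$-Cauchy in the subspace $\pi_{x}(\mathcal{F})$ with the induced uniformity. Invoking hypothesis $\mathbf{(b)}$, $\pi_{x}(\mathcal{F})$ is ideal complete, so for each $x\in X$ the net $\{f_{\lambda}(x)\}$ is $I$-convergent in $\pi_{x}(\mathcal{F})$ to some point, which I denote $g(x)\in \pi_{x}(\mathcal{F})\subset Y$. This prescription defines a function $g\in Y^{X}$.

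The main obstacle is showing that this pointwise-built $g$ actually belongs to $\mathcal{F}$, and this is exactly where hypothesis $\mathbf{(a)}$ must be used. Since $g(x)\in Y$ and $\{f_{\lambda}(x)\}$ is $I$-convergent to $g(x)$ in $\pi_{x}(\mathcal{F})$, hence also in $Y$ (as $I$-convergence in a subspace to a point is inherited by the ambient space), Theorem 2.2 applied with the full product yields that $\{f_{\lambda}\}$ is $I$-convergent to $g$ in the pointwise topology on $Y^{X}$. I then claim $g$ lies in the pointwise closure of $\{f_{\lambda}:\lambda\in \Lambda\}$: for any open neighbourhood $U$ of $g$, $I$-convergence gives $\{\lambda\in \Lambda: f_{\lambda}\notin U\}\in I$, and since $I$ is non-trivial we have $\Lambda\notin I$, so this set is a proper subset of $\Lambda$ and some $f_{\lambda}$ must lie in $U$. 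Thus every pointwise neighbourhood of $g$ meets $\mathcal{F}$, so $g$ is in the pointwise closure of $\mathcal{F}$; by $\mathbf{(a)}$ this closure is $\mathcal{F}$ itself, whence $g\in \mathcal{F}$.

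Finally, with $g\in \mathcal{F}$ secured and $\{f_{\lambda}(x)\}$ $I$-convergent to $g(x)$ in $\pi_{x}(\mathcal{F})$ for every $x\in X$, a second application of Theorem 2.2, this time inside $\mathcal{F}$, shows that $\{f_{\lambda}\}$ is $I$-convergent to $g$ in $\mathcal{F}$. As the starting net was an arbitrary $I$-Cauchy net, every $I$-Cauchy net in $\mathcal{F}$ is $I$-convergent, i.e. $\mathcal{F}$ is ideal complete. I expect the only delicate points, beyond the closure argument above, to be the routine verifications that restriction to the subspaces $\mathcal{F}\subset Y^{X}$ and $\pi_{x}(\mathcal{F})\subset Y$ preserves both the $I$-Cauchy and the $I$-convergence conditions; it is worth emphasising that the closure argument relies solely on the non-triviality of $I$ and requires no $D$-admissibility assumption.
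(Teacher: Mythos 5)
Your proposal is correct and follows essentially the same route as the paper: pass to the coordinate nets, use hypothesis (b) to get a pointwise $I$-limit $g$, apply Theorem 2.2 to obtain $I$-convergence in $Y^{X}$, and use (a) to place $g$ in $\mathcal{F}$. The only difference is that you spell out why the $I$-limit of a net lying in $\mathcal{F}$ must belong to the pointwise closure of $\mathcal{F}$ (using only $\Lambda\notin I$), a step the paper leaves implicit; this is a worthwhile clarification but not a different argument.
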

\begin{proof}
Let $\{f_{\lambda}: \lambda\in \Lambda\}$ be an $I$-Cauchy net in $\mathcal{F}$. Then for each $x\in X$, $\{\pi_{x}(f_{\lambda}): \lambda\in \Lambda\}$ is an $I$-Cauchy net in $\pi_{x}(\mathcal{F})$. Since $\pi_{x}(\mathcal{F})$ is ideal complete, so by definition $\{\pi_{x}(f_{\lambda}): \lambda\in \Lambda\}$ is $I$-convergent to some $f(x)\in \pi_{x}(\mathcal{F})$ and this holds for each $x\in X$. Thus we see that $\{\pi_{x}(f_{\lambda}): \lambda\in \Lambda\}$ is $I$-convergent to $f(x)$ in $\pi_{x}(\mathcal{F})$ for each $x\in X$. Now applying Theorem 2.2 we get that $\{f_{\lambda}: \lambda\in \Lambda\}$ is $I$-convergent to $f$ in $Y^{X}$. Since $\mathcal{F}$ is pointwise closed so we have $f\in \mathcal{F}$. Hence the result follows.
\end{proof}
We know that pointwise $I$-limit of continuous functions (on the real line, say) need not be continuous, so that $C(X,Y)$, the space of all continuous functions from $X$ to $Y$ is not always ideal complete in the uniformity of pointwise convergence.    \\
The uniformity of pointwise convergence and its topology occupy one end of the spectrum of structures used to make function spaces out of collections of functions. At the other end sit the uniformity of uniform convergence and its topology which we recapitulate below.
\begin{defn} (\cite{SW})
If $Y$ has a uniformity $\mathcal{U}$, the family of sets of the form 
\begin{center}
$E_{U}=\{(f,g)\in Y^{X}\times Y^{X}: (f(x),g(x))\in U$ for each $x\in X\}$
\end{center}
for $U\in \mathcal{U}$, form a base  for a uniformity $\mathcal{U}_{u}$ on $Y^{X}$ called the uniformity of uniform convergence. Its associated topology, $\tau_{\mathcal{U}_{u}}$, is the topology of uniform convergence.   
\end{defn}
\begin{defn}
If a net $\{f_{\lambda}:\lambda\in \Lambda\}$ in $Y^{X}$is $I$-convergent to $f\in Y^{X}$ in the topology of uniform convergence, we say $\{f_{\lambda}: \lambda\in \Lambda\}$ is uniformly $I$-convergent to $f$, where $I$ is a non-trivial ideal of $\Lambda$. 
\end{defn}
\begin{defn}
If a net $\{f_{\lambda}:\lambda\in \Lambda\}$ in $Y^{X}$ is $I$-Cauchy in the uniformity of uniform convergence then we call $\{f_{\lambda}:\lambda\in \Lambda\}$ uniformly $I$-Cauchy, where $I$ is a non-trivial ideal of $\Lambda$.
\end{defn}
The next theorem provides a relationship between pointwise $I$-convergence and uniform $I$-convergence of a net and subsequently gives a necessary and sufficient condition for a product uniform space with respect to the uniformity of uniform convergence to be ideal complete.
\begin{thm}
A net $\{f_{\lambda}:\lambda\in \Lambda\}$ in $(Y^{X},\tau_{\mathcal{U}_{u}})$ is uniformly $I$-convergent to $f$ if and only if $\mathbf{(i)}$ the net $\{f_{\lambda}:\lambda\in \Lambda\}$ is uniformly $I$-Cauchy in $(Y^{X},\mathcal{U}_{u})$ and $\mathbf{(ii)}$ the net $\{f_{\lambda}(x):\lambda\in \Lambda\}$ is $I$-convergent to $f(x)$ in $(Y,\tau_{\mathcal{U}})$ for each $x\in X$           \\
 $\mathbf{[}$ i.e., the net $\{f_{\lambda}:\lambda\in \Lambda\}$ is pointwise $I$-convergent to $f$ in $(Y^{X},\tau_{\mathcal{U}_{u}})$ $\mathbf{]}$ where $I$ is a non-trivial ideal of $\Lambda$.
\end{thm}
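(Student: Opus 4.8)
The plan is to treat the two implications separately; the forward direction is essentially formal, while the converse carries the real content. Assume first that $\{f_{\lambda}:\lambda\in\Lambda\}$ is uniformly $I$-convergent to $f$. Condition $\mathbf{(i)}$ is then immediate from the theorem of Das and Ghosal \cite{DG}, recalled in the introduction, that every $I$-convergent net in a uniform space is $I$-Cauchy with respect to that uniformity; applied to $(Y^{X},\mathcal{U}_{u})$ it yields that $\{f_{\lambda}\}$ is uniformly $I$-Cauchy. For condition $\mathbf{(ii)}$ I would fix $x\in X$ and $U\in\mathcal{U}$ and note that $(f_{\lambda},f)\in E_{U}$ forces $(f_{\lambda}(x),f(x))\in U$, so that $\{\lambda:(f_{\lambda}(x),f(x))\notin U\}\subseteq\{\lambda:(f_{\lambda},f)\notin E_{U}\}$; the latter set lies in $I$ by uniform $I$-convergence, and since $I$ is hereditary the former does too, giving pointwise $I$-convergence to $f$.

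For the converse assume $\mathbf{(i)}$ and $\mathbf{(ii)}$ and fix an arbitrary symmetric $U\in\mathcal{U}$ (symmetric entourages form a base, so this loses nothing); choose a symmetric $V\in\mathcal{U}$ with $V\circ V\subseteq U$. Feeding the uniform $I$-Cauchy hypothesis into the equivalent form $\mathbf{(ii)}$ of Theorem 2.1, applied in $(Y^{X},\mathcal{U}_{u})$, I extract a single set $A\in I$ such that $\alpha,\beta\notin A$ implies $(f_{\alpha},f_{\beta})\in E_{V}$, that is $(f_{\alpha}(x),f_{\beta}(x))\in V$ for every $x\in X$ simultaneously. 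Separately, for each $x\in X$ pointwise $I$-convergence provides $B_{x}:=\{\beta\in\Lambda:(f_{\beta}(x),f(x))\notin V\}\in I$. Securing this one uniform exceptional set $A$ through Theorem 2.1$\mathbf{(ii)}$, rather than through the bare $I$-Cauchy definition, is what keeps the forthcoming estimate uniform across all coordinates.

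The crux is a diagonal argument. Fix any $\alpha\notin A$ and any $x\in X$. Since $A\cup B_{x}\in I$ while $\Lambda\notin I$ by non-triviality of the ideal, the set $\Lambda\setminus(A\cup B_{x})$ is non-empty, so I may pick $\beta_{x}\notin A\cup B_{x}$; then $(f_{\alpha}(x),f_{\beta_{x}}(x))\in V$ and $(f_{\beta_{x}}(x),f(x))\in V$, whence $(f_{\alpha}(x),f(x))\in V\circ V\subseteq U$. As $x$ was arbitrary this gives $(f_{\alpha},f)\in E_{U}$, and as $\alpha\notin A$ was arbitrary we obtain $\{\alpha:(f_{\alpha},f)\notin E_{U}\}\subseteq A\in I$. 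Since $U$ ranged over a base of symmetric entourages and the sets $E_{U}[f]$ form a neighbourhood base at $f$ in $\tau_{\mathcal{U}_{u}}$, this is exactly uniform $I$-convergence of $\{f_{\lambda}\}$ to $f$. I expect this diagonal step to be the main obstacle: no single index $\beta$ can serve all $x$ at once, because $\bigcup_{x}B_{x}$ need not belong to $I$; the resolution is that each $\beta_{x}$ enters only as an intermediate point, so that the resulting bound $(f_{\alpha}(x),f(x))\in U$ holds uniformly in $x$ over the fixed tail $\Lambda\setminus A$.
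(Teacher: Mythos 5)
Your proposal is correct and follows essentially the same route as the paper's own proof: the forward direction via the Das--Ghosal result plus the inclusion of exceptional sets, and the converse via a symmetric $V$ with $V\circ V\subseteq U$, the single exceptional set $A\in I$ from Theorem 2.1, and an intermediate index chosen outside $A$ and the pointwise exceptional set at each $x$. The only differences are cosmetic — you argue with complements of ideal members where the paper argues with the associated filter $F(I)$, and you fix $\alpha$ and vary the intermediate index $\beta_{x}$ where the paper fixes $\alpha_{0}\in A^{c}\cap C_{x_{0}}$ and lets $\alpha$ range over $A^{c}$.
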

\begin{proof}
Let $\{f_{\lambda}:\lambda\in \Lambda\}$ be a net in $(Y^{X},\mathcal{U}_{u})$ which is uniformly $I$-convergent to $f\in Y^{X}$. So by Theorem 2 of (\cite{DG}) it follows that $\{f_{\lambda}:\lambda\in \Lambda\}$ is uniformly $I$-Cauchy. Now by definition of $I$-convergence of a net for any $U\in \mathcal{U}$ we have the set $\{\lambda\in \Lambda: f_{\lambda}\notin E_{U}[f]\}\in I$, where $E_{U}[f]=\{g\in Y^{X}: (f,g)\in E_{U}\}=\{g\in Y^{X}: (f(x),g(x))\in U$ for each $x\in X\}$. This implies the set $\{\lambda\in \Lambda: f_{\lambda}\in E_{U}[f]\}\in F(I)$, where $F(I)$ is the filter associated with the ideal $I$. Let $A=\{\lambda\in \Lambda: f_{\lambda}\in E_{U}[f]\}$. Then $\lambda_{0}\in A$ implies $f_{\lambda_{0}}\in E_{U}[f]$, i.e., $(f,f_{\lambda_{0}})\in E_{U}$ and so $(f(x),f_{\lambda_{0}}(x))\in U$ for each $x\in X$, i.e., $f_{\lambda_{0}}(x)\in U[f(x)]$ for each $x\in X$. Now let $x_{0}\in X$ be an arbitrary element. Then if $A_{0}=\{\alpha\in \Lambda: f_{\alpha}(x_{0})\in U[f(x_{0})]\}$, we see that $A\subset A_{0}$ and since $A\in F(I)$ so we have by definition of a filter $A_{0}\in F(I)$ as well. So, the set $\{\alpha\in \Lambda: f_{\alpha}(x_{0})\notin U[f(x_{0})]\}\in I$ i.e., the net $\{f_{\lambda}(x_{0}): \lambda\in \Lambda\}$ is $I$-convergent to $f(x_{0})$ in $(Y,\tau_{\mathcal{U}})$. Hence we conclude that the net $\{f_{\lambda}(x):\lambda\in \Lambda\}$ is $I$-convergent to $f(x)$ in $(Y,\tau_{\mathcal{U}})$ for each $x\in X$.  \\

Conversely, let the net $\{f_{\lambda}:\lambda\in \Lambda\}$ in $Y^{X}$ be pointwise $I$-convergent to $f$ i.e., the net $\{f_{\lambda}(x):\lambda\in \Lambda\}$ is $I$-convergent to $f(x)$ in $(Y,\tau_{\mathcal{U}})$ for each $x\in X$ and $\{f_{\lambda}:\lambda\in \Lambda\}$ is uniformly $I$-Cauchy in $(Y^{X},\mathcal{U}_{u})$. To show that $\{f_{\lambda}:\lambda\in \Lambda\}$ is uniformly $I$-convergent to $f$ in $(Y^{X},\tau_{\mathcal{U}_{u}})$, we are to show that for any $U\in \mathcal{U}$ the set $\{\lambda\in \Lambda: f_{\lambda}\notin E_{U}[f]\}\in I$.  Now for each $x\in X$ we have the set $\{\lambda\in \Lambda: f_{\lambda}(x)\notin U[f(x)]\}\in I$, i.e., $\{\lambda\in \Lambda: f_{\lambda}(x)\in U[f(x)]\}\in F(I)$, where $F(I)$ is the filter associated with the ideal $I$. For each $x\in X$ let us call the set $B_{x}=\{\lambda\in \Lambda: f_{\lambda}(x)\in U[f(x)]\}$. Then $B_{x}\in F(I)$ for each $x\in X$. Now choose a symmetric $V\in \mathcal{U}$ such that $V\circ V\subset U$. For each $x\in X$ let us call $C_{x}=\{\lambda\in \Lambda: f_{\lambda}(x)\in V[f(x)]\}$. Then again on the basis of the condition \textbf{(ii)} we have $C_{x}\in F(I)$ for each $x\in X$. Since $\{f_{\lambda}:\lambda\in \Lambda\}$ is uniformly $I$-Cauchy in $(Y^{X},\mathcal{U}_{u})$ so by Theorem 2.1 we have for $V\in \mathcal{U}$ there exists $A\in I$ such that $\alpha, \beta\notin A$ implies $(f_{\alpha},f_{\beta})\in E_{V}$. We will prove that for any arbitrary $X_{0}\in X$, $A^{c}\subset B_{x_{0}}$. Now, since $A^{c},C_{x_{0}}\in F(I)$ so $A^{c}\cap C_{x_{0}}\neq \phi$. Let us take $\alpha_{0}\in A^{c}\cap C_{x_{0}}$. Then for any $\alpha\in A^{c}$ we have $(f_{\alpha_{0}},f_{\alpha})\in E_{V}$, i.e., $(f_{\alpha_{0}}(x),f_{\alpha}(x))\in V$ for each $x\in X$. Hence in particular $(f_{\alpha_{0}}(x_{0}),f_{\alpha}(x_{0}))\in V$. Again since $\alpha_{0}\in C_{x_{0}}$, so, $f_{\alpha_{0}}(x_{0})\in V[f(x_{0})]$, i.e., $(f(x_{0}),f_{\alpha_{0}}(x_{0}))\in V$. Thus we get $(f(x_{0}),f_{\alpha}(x_{0}))\in V\circ V\subset U$, i.e., $f_{\alpha}(x_{0})\in U[f(x_{0})]$, i.e., $\alpha\in B_{x_{0}}$. Hence $A^{c}\subset B_{x_{0}}$. Since $x_{0}\in X$ has been chosen arbitrarily so we conclude that $A^{c}\subset \bigcap_{x\in X}B_{x}$. This in turn implies that $\bigcap_{x\in X}B_{x}\in F(I)$. Now we see that $\bigcap_{x\in X}B_{x}=\{\lambda\in \Lambda: f_{\lambda}(x)\in U[f(x)]$ for each $x\in X\}= \{\lambda\in \Lambda: f_{\lambda}\in E_{U}[f]\}$. Thus in turn we have proved that $\{\lambda\in \Lambda: f_{\lambda}\notin E_{U}[f]\}\in I$. Hence the result follows.
\end{proof}
\begin{thm}
If a uniform space $(Y,\mathcal{U})$ is ideal complete then so are        \\
$\mathbf{(a)}$ $(Y^{X},\mathcal{U}_{u})$         \\
$\mathbf{(b)}$ $(C(X,Y),\mathcal{U}_{u})$, where $C(X,Y)$ is the space of all continuous functions from $X$ to $Y$ with the uniformity of uniform convergence $\mathcal{U}_{u}$.
\end{thm}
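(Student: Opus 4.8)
The plan is to deduce both parts from the characterization of uniform $I$-convergence (Theorem 2.6) together with the ideal completeness of $Y$. The engine in both cases is that a uniformly $I$-Cauchy net is automatically pointwise $I$-Cauchy, hence (by completeness of $Y$) has a candidate pointwise limit, which Theorem 2.6 then upgrades to a uniform $I$-limit.

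For part (a), I would start with an arbitrary net $\{f_{\lambda}:\lambda\in\Lambda\}$ in $Y^{X}$ that is $I$-Cauchy in $(Y^{X},\mathcal{U}_{u})$. The first step is the observation that $\mathcal{U}_{u}$ is finer than the pointwise uniformity $\mathcal{U}_{p}$: given a basic pointwise entourage $\bigcap_{i=1}^{n}P_{x_{i}}^{-1}(U_{i})$, putting $U_{0}=\bigcap_{i=1}^{n}U_{i}\in\mathcal{U}$ one checks immediately that $E_{U_{0}}\subset\bigcap_{i=1}^{n}P_{x_{i}}^{-1}(U_{i})$, so every pointwise entourage lies in $\mathcal{U}_{u}$. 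Consequently a net that is $I$-Cauchy for $\mathcal{U}_{u}$ is, a fortiori, $I$-Cauchy for $\mathcal{U}_{p}$, and Theorem 2.4 then gives that each coordinate net $\{f_{\lambda}(x):\lambda\in\Lambda\}$ is $I$-Cauchy in $Y$. Since $(Y,\mathcal{U})$ is ideal complete, each coordinate net is $I$-convergent, say to a point I call $f(x)$, which defines a function $f\in Y^{X}$. By Theorem 2.2 the net $\{f_{\lambda}\}$ is then pointwise $I$-convergent to $f$, and since it is also uniformly $I$-Cauchy, Theorem 2.6 yields uniform $I$-convergence of $\{f_{\lambda}\}$ to $f$. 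This proves $(Y^{X},\mathcal{U}_{u})$ is ideal complete.

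For part (b) I would regard $C(X,Y)$ as a subspace of $(Y^{X},\mathcal{U}_{u})$ (here $X$ is taken to carry a topology, so that continuity is meaningful). A net in $C(X,Y)$ that is $I$-Cauchy for the subspace uniformity is $I$-Cauchy in $(Y^{X},\mathcal{U}_{u})$, so by part (a) it is uniformly $I$-convergent to some $f\in Y^{X}$. The entire content of (b) is thus to verify that this limit $f$ is continuous; once that is done, $f\in C(X,Y)$ and the net $I$-converges inside $C(X,Y)$, giving ideal completeness.

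The continuity of the uniform $I$-limit is the step I expect to be the main obstacle, as it is where the topology of $X$ and the nature of $I$-convergence must be combined; the plan is the familiar three-entourage argument adapted to the ideal setting. Fix $x_{0}\in X$ and $U\in\mathcal{U}$, and choose a symmetric $V\in\mathcal{U}$ with $V\circ V\circ V\subset U$. By uniform $I$-convergence, the set $\{\lambda\in\Lambda:(f_{\lambda}(x),f(x))\in V \text{ for all } x\in X\}$ lies in the filter $F(I)$; since $I$ is non-trivial this filter is proper, so the set is nonempty and I may fix one such index $\lambda_{0}$, giving $(f_{\lambda_{0}}(x),f(x))\in V$ for every $x$. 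Continuity of $f_{\lambda_{0}}$ at $x_{0}$ then yields a neighbourhood $W$ of $x_{0}$ with $(f_{\lambda_{0}}(x_{0}),f_{\lambda_{0}}(x))\in V$ for all $x\in W$. Chaining $(f(x_{0}),f_{\lambda_{0}}(x_{0}))\in V$, $(f_{\lambda_{0}}(x_{0}),f_{\lambda_{0}}(x))\in V$ and $(f_{\lambda_{0}}(x),f(x))\in V$ through $V\circ V\circ V\subset U$ gives $f(x)\in U[f(x_{0})]$ for all $x\in W$, which is precisely continuity of $f$ at $x_{0}$. As $x_{0}$ is arbitrary, $f\in C(X,Y)$, completing the proof.
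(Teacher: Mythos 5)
Your proposal is correct, but part (b) takes a genuinely different route from the paper. For part (a) your argument coincides with the paper's: both deduce that each coordinate net $\{f_{\lambda}(x)\}$ is $I$-Cauchy, use ideal completeness of $Y$ to produce the candidate $f$, and invoke Theorem 2.6 to upgrade pointwise $I$-convergence plus uniform $I$-Cauchyness to uniform $I$-convergence (you merely make explicit, via the comparison $E_{U_0}\subset\bigcap_{i}P_{x_i}^{-1}(U_i)$, the step the paper leaves tacit). For part (b), the paper does not touch continuity directly at all: it quotes Willard's Theorem 42.10 that $C(X,Y)$ is closed in $(Y^{X},\tau_{\mathcal{U}_u})$, and then proves two general lemmas --- that an $I$-Cauchy net in a subspace is $I$-Cauchy in the ambient space, and that a closed subspace of an ideal complete uniform space is ideal complete (the latter using the fact that an $I$-limit of a net lying in $N$ belongs to $\overline{N}$). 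You instead re-prove, in the ideal setting, the classical fact that a uniform limit of continuous functions is continuous, extracting a single index $\lambda_0$ from a member of the proper filter $F(I)$ and running the three-entourage chain; this is valid precisely because non-triviality of $I$ guarantees every member of $F(I)$ is nonempty, and your choice of a symmetric $V$ with $V\circ V\circ V\subset U$ makes the chaining legitimate. The trade-off: the paper's route yields a reusable general lemma (closed subspaces inherit ideal completeness, which it indeed reuses in Theorem 2.10) at the cost of importing an external closedness theorem whose applicability to $I$-limits must still be checked; your route is self-contained and makes transparent exactly where continuity of the limit comes from, but proves only the specific fact needed here.
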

\begin{proof}
$\mathbf{(a)}$ Let a net $\{f_{\lambda}:\lambda\in \Lambda\}$ be uniformly $I$-Cauchy in $(Y^{X},\mathcal{U}_{u})$. Then the net $\{f_{\lambda}(x):\lambda\in \Lambda\}$ is $I$-Cauchy in $(Y,\mathcal{U})$ for each $x\in X$. Hence $\{f_{\lambda}(x):\lambda\in \Lambda\}$ is $I$-convergent to some $f(x)\in Y$, since $(Y,\mathcal{U})$ is ideal complete space. By previous result the function $f\in Y^{X}$ defined by $f(x)=I$-lim $f_{\lambda}(x)$ for each $x\in X$ is uniform $I$-limit of the net $\{f_{\lambda}:\lambda\in \Lambda\}$. Thus $(Y^{X},\mathcal{U}_{u})$ is ideal complete.    \\
$\mathbf{(b)}$ It has been proved in Theorem 42.10 of (\cite{SW}) that $(C(X,Y),\mathcal{U}_{u})$ is a closed subspace of $(Y^{X},\mathcal{U}_{u})$.  \\
First we prove that if $(M,\mathcal{D})$ is a uniform space and $N\subset M$ then an $I$-Cauchy net $\{x_{\lambda}: \lambda\in \Lambda\}$ in $(N,\mathcal{D}_{N})$ is also $I$-Cauchy in $(M,\mathcal{D})$, where $I$ is a non-trivial ideal of $\Lambda$ and $\mathcal{D}_{N}$ is the relative uniformity induced on $N$ by $\mathcal{D}$. Now since $\{x_{\lambda}: \lambda\in \Lambda\}$ is an $I$-Cauchy net in $(N,\mathcal{D}_{N})$ so for each $D_{N}\in \mathcal{D}_{N}$ there exists some $\lambda_{0}\in \Lambda$ such that the set $\{\lambda\in \Lambda: (x_{\lambda},x_{\lambda_{0}})\notin D_{N}\}\in I$. Now $D_{N}=D\cap (N\times N)$ where $D\in \mathcal{D}$ and for each $D\in \mathcal{D}$ there corresponds a $D_{N}\in \mathcal{D}_{N}$. Also we note that $\{\lambda\in \Lambda: (x_{\lambda},x_{\lambda_{0}})\notin D\}\subset \{\lambda\in \Lambda: (x_{\lambda},x_{\lambda_{0}})\notin D_{N}\}$.  Hence $\{\lambda\in \Lambda: (x_{\lambda},x_{\lambda_{0}})\notin D\}\in I$. So, for each $D\in \mathcal{D}$ there is some $\lambda_{0}\in \Lambda$ such that the set $\{\lambda\in \Lambda: (x_{\lambda},x_{\lambda_{0}})\notin D\}\in I$. Hence $\{x_{\lambda}: \lambda\in \Lambda\}$ becomes an $I$-Cauchy net in $(M,\mathcal{D})$.   \\
Secondly, we show  that if $(M,\mathcal{D})$ is an ideal complete uniform space and $N$ be a closed subset of $(M,\tau_{\mathcal{D}})$ then $(N,\mathcal{D}_{N})$ becomes an ideal complete space.      \\
Since an $I$-Cauchy net $\{x_{\lambda}: \lambda\in \Lambda\}$ in $(N,\mathcal{D}_{N})$ is also $I$-Cauchy in $(M,\mathcal{D})$ and if $(M,\mathcal{D})$ happens to be ideal complete space so the net $\{x_{\lambda}: \lambda\in \Lambda\}$ is $I$-convergent to some $x_{0}\in M$ in $(M,\tau_{\mathcal{D}})$. Now if $x_{0}\in N$ then $\{x_{\lambda}: \lambda\in \Lambda\}$ is $I$-convergent in $(N,\tau_{\mathcal{D}_{N}})$. But if $x_{0}\notin N$ then $\{x_{\lambda}: \lambda\in \Lambda\}$ is a net in $N\setminus \{x_{0}\}$ such that it is $I$-convergent to $x_{0}\in M$. Then $x_{0}$ becomes a limit point of $N$ [by Theorem 3 of (\cite{LD3})]. Since $N$ is closed in $(M,\tau_{\mathcal{D}})$ then $x_{0}\in N$. In any case $x_{0}\in N$ if $N$ is closed in $(M,\tau_{\mathcal{D}})$. Hence $\{x_{\lambda}: \lambda\in \Lambda\}$ is an $I$-Cauchy net in $(N,\mathcal{D}_{N})$ which becomes $I$-convergent in $(N,\tau_{\mathcal{D}_{N}})$. \\
Thus we conclude that $(C(X,Y),\mathcal{U}_{u})$ is an ideal complete subspace of $(Y^{X},\mathcal{U}_{u})$.
 \end{proof}
If we involve the topology of $X$ in our function space and $Y$ has a uniform structure we can have a uniform structure on $Y^{X}$ which is called the uniformity of uniform convergence on compacta or the uniformity of compact convergence. We recall below the definition of that uniformity and the associated topology.
\begin{defn} (\cite{SW})
Suppose $Y$ has a uniformity $\mathcal{U}$. The uniformity of uniform convergence on compacta or the uniformity of compact convergence, $\mathcal{U}_{k}$, has for a subbase the sets  
\begin{center}
$E_{K,U}=\{(f,g)\in Y^{X}\times Y^{X}: (f(x),g(x))\in U$, for each $x\in K\}$
\end{center}
where $K$ is a compact subset of $X$ and $U\in \mathcal{U}$. The topology $\tau_{\mathcal{U}_k}$ thus induced on $Y^{X}$ is the topology of compact convergence.
\end{defn}
\begin{thm}
A net $\{f_{\lambda}:\lambda\in \Lambda\}$ is $I$-convergent to $f$ in $(Y^{X},\tau_{\mathcal{U}_{k}})$ where $\tau_{\mathcal{U}_{k}}$ is the topology of uniform convergence on compacta if and only if for each compact subset $K$ of $X$, $\{f_{\lambda}|_{K}:\lambda\in \Lambda\}$ is uniformly $I$-convergent to $f|_{K}$ in $(Y^{K},\tau_{\mathcal{U}_{u}})$ where $\tau_{\mathcal{U}_{u}}$ is the topology of uniform convergence on $Y^{K}$ and $I$ is a non-trivial ideal of $\Lambda$. 
\end{thm}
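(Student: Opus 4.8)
The plan is to reduce $I$-convergence in $(Y^{X},\tau_{\mathcal{U}_{k}})$ to a statement about the subbasic entourages $E_{K,U}$ and then to exploit the transparent correspondence between these entourages and the entourages of uniform convergence on each restriction space $Y^{K}$. For a compact $K\subset X$ and $U\in \mathcal{U}$, write $E^{K}_{U}=\{(g,h)\in Y^{K}\times Y^{K}:(g(x),h(x))\in U\text{ for each }x\in K\}$ for the basic entourage of the uniformity of uniform convergence on $Y^{K}$. The first thing I would record is the translation identity: unravelling the definitions, $f_{\lambda}\in E_{K,U}[f]$ in $Y^{X}$ means $(f(x),f_{\lambda}(x))\in U$ for every $x\in K$, which is exactly the assertion $f_{\lambda}|_{K}\in E^{K}_{U}[f|_{K}]$ in $Y^{K}$. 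Hence
\[
\{\lambda\in\Lambda:f_{\lambda}\notin E_{K,U}[f]\}=\{\lambda\in\Lambda:f_{\lambda}|_{K}\notin E^{K}_{U}[f|_{K}]\}.
\]
I would also recall that, since the finite intersections of the $E_{K,U}$ form a base for $\mathcal{U}_{k}$, the sets $E[f]$ for such basic entourages $E$ constitute a neighbourhood base at $f$ in $\tau_{\mathcal{U}_{k}}$; consequently $I$-convergence of $\{f_{\lambda}\}$ to $f$ is equivalent to requiring $\{\lambda\in\Lambda:f_{\lambda}\notin E[f]\}\in I$ for every such basic $E$.

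For the forward direction I would fix a compact $K$ and a $U\in \mathcal{U}$. As $E_{K,U}$ is itself a subbasic (hence basic) entourage, $E_{K,U}[f]$ is a neighbourhood of $f$, so the hypothesis of $I$-convergence gives $\{\lambda\in\Lambda:f_{\lambda}\notin E_{K,U}[f]\}\in I$; by the translation identity this is precisely $\{\lambda\in\Lambda:f_{\lambda}|_{K}\notin E^{K}_{U}[f|_{K}]\}\in I$. Since $U$ and $K$ are arbitrary, $\{f_{\lambda}|_{K}:\lambda\in\Lambda\}$ is then uniformly $I$-convergent to $f|_{K}$ in $(Y^{K},\tau_{\mathcal{U}_{u}})$ for every compact $K\subset X$.

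For the converse I would start from an arbitrary basic neighbourhood $E[f]$ of $f$, where $E=\bigcap_{i=1}^{n}E_{K_{i},U_{i}}$ with each $K_{i}$ compact and each $U_{i}\in \mathcal{U}$. Then $f_{\lambda}\notin E[f]$ holds iff $f_{\lambda}\notin E_{K_{i},U_{i}}[f]$ for at least one $i$, so that
\[
\{\lambda\in\Lambda:f_{\lambda}\notin E[f]\}=\bigcup_{i=1}^{n}\{\lambda\in\Lambda:f_{\lambda}|_{K_{i}}\notin E^{K_{i}}_{U_{i}}[f|_{K_{i}}]\}.
\]
Each set in this finite union lies in $I$ by the assumed uniform $I$-convergence of $\{f_{\lambda}|_{K_{i}}\}$ to $f|_{K_{i}}$, and since $I$ is closed under finite unions the whole set belongs to $I$. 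As $E[f]$ ranges over a neighbourhood base at $f$, this yields the $I$-convergence of $\{f_{\lambda}\}$ to $f$ in $(Y^{X},\tau_{\mathcal{U}_{k}})$.

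The only point genuinely requiring care — and the step I would flag as the main, albeit modest, obstacle — is the converse, where a basic entourage of $\mathcal{U}_{k}$ is a finite intersection of the subbasic $E_{K_{i},U_{i}}$: one must pass from the finitely many compacta $K_{i}$ to their individual convergence statements and then invoke closure of $I$ under finite unions to reassemble them. Everything else is a direct transcription of the definition of $I$-convergence through the correspondence between $E_{K,U}[f]$ and $E^{K}_{U}[f|_{K}]$.
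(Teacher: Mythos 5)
Your proposal is correct and follows essentially the same route as the paper: both rest on the translation identity $f_{\lambda}\in E_{K,U}[f]\iff f_{\lambda}|_{K}\in E^{K}_{U}[f|_{K}]$ and then handle the converse by reducing a basic neighbourhood $\bigl(\bigcap_{i=1}^{n}E_{K_{i},U_{i}}\bigr)[f]$ to the finitely many compacta $K_{i}$. The only cosmetic difference is that you work with finite unions of sets in $I$ where the paper dualizes to finite intersections of sets in the associated filter $F(I)$; these are equivalent.
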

\begin{proof}
Let $\{f_{\lambda}:\lambda\in \Lambda\}$ be $I$-convergent to $f$ in $(Y^{X},\tau_{\mathcal{U}_{k}})$. This implies for each subbasic open set $E_{K,U}[f]$ containing $f$ in $(Y^{X},\tau_{\mathcal{U}_{k}})$ the set $\{\lambda\in \Lambda: f_{\lambda}\notin E_{K,U}[f]\}\in I$. This holds for each fixed compact subset $K$ of $X$ and for each $U\in \mathcal{U}$. Hence for each compact subset $K$ of $X$, $\{f_{\lambda}|_{K}:\lambda\in \Lambda\}$ is uniformly $I$-convergent to $f|_{K}$ in $(Y^{K},\tau_{\mathcal{U}_{u}})$.  \\
Conversely suppose $\{f_{\lambda}|_{K}:\lambda\in \Lambda\}$ is uniformly $I$-convergent to $f|_{K}$ in $(Y^{K},\tau_{\mathcal{U}_{u}})$ for each compact subset $K$ of $X$. Let $E_{K_{i},U_{i}}[f|_{K_{i}}]$ be arbitrarily chosen basic open sets containing $f|_{K_{i}}$ in $(Y^{K_{i}},\tau_{\mathcal{U}_{u}})$ for $i=1,2,....,n$ respectively. Then we have the sets $\{\lambda\in \Lambda: f_{\lambda}|_{K_{i}}\notin E_{K_{i},U_{i}}[f|_{K_{i}}]\}\in I$ for all $i=1,2,....,n$. This implies the sets $\{\lambda\in \Lambda: f_{\lambda}|_{K_{i}}\in E_{K_{i},U_{i}}[f|_{K_{i}}]\}\in F(I)$ for all $i=1,2,....,n$, where $F(I)$ is the filter associated with the ideal $I$. We note below the follwing observation.   \\
Let $K$ be any compact subset of $X$, $U$ be any member of the uniformity $\mathcal{U}$ on $Y$ and $f\in Y^{X}$. Now we see that if $E_{K,U}[f|_{K}]$ be a basic open set (as per definition 2.7) containing $f|_{K}$ in $(Y^{K},\tau_{\mathcal{U}_{u}})$ and $E_{K,U}[f]$ be a subbasic open set (as per difinition 2.10) containing $f$ in $(Y^{X},\tau_{\mathcal{U}_{k}})$ then 
\begin{center}
 $E_{K,U}[f|_{K}]= \{h\in Y^{K}: (f|_{K},h)\in E_{K,U}\}$ \\
$=\{h\in Y^{K}: (f|_{K}(x),h(x))\in U$ for each $x\in K\}$  \\
\end{center}
and 
\begin{center}
$E_{K,U}[f]=\{g\in Y^{X}: (f,g)\in E_{K,U}\}$  \\
$=\{g\in Y^{X}: (f(x),g(x))\in U$ for each $x\in X \}$  \\
$=\{g\in Y^{X}: (f|_{K}(x),g|_{K}(x))\in U$ for each $x\in K\}$
\end{center}
Now let $g\in Y^{X}$ be arbitrary. Then $g|_{K}\in Y^{K}$. Now it is clear from above that $g|_{K}\in E_{K,U}[f|_{K}]$ implies $g\in E_{K,U}[f]$ and conversely $g\in E_{K,U}[f]$ implies $g|_{K}\in E_{K,U}[f|_{K}]$.
Thus we obtain $\{\lambda\in \Lambda: f_{\lambda}|_{K}\in E_{K,U}[f|_{K}]\}=\{\lambda\in \Lambda: f_{\lambda}\in E_{K,U}[f]\}$. Consequently we get from the above observation that $\{\lambda\in \Lambda: f_{\lambda}\in E_{K_{i},U_{i}}[f]\}\in F(I)$ for all $i=1,2,....,n$.
Hence $\bigcap_{i=1}^{n}\{\lambda\in \Lambda: f_{\lambda}\in E_{K_{i},U_{i}}[f]\}\in F(I)$, i.e., $\{\lambda\in \Lambda: f_{\lambda}\in \bigcap_{i=1}^{n} (E_{K_{i},U_{i}}[f])\}\in F(I)$, i.e., $\{\lambda\in \Lambda: f_{\lambda}\in (\bigcap_{i=1}^{n} E_{K_{i},U_{i}})[f]\}\in F(I)$ . So we get, $\{\lambda\in \Lambda: f_{\lambda}\notin (\bigcap_{i=1}^{n}E_{K_{i},U_{i}})[f]\}\in I$. Now keeping in mind that ($\bigcap_{i=1}^{n}E_{K_{i},U_{i}})[f]$ being a basic open set in $(Y^{X},\tau_{\mathcal{U}_{k}})$ containing $f$ the net $\{f_{\lambda}:\lambda\in \Lambda\}$ is $I$-convergent to $f$ in $(Y^{X},\tau_{\mathcal{U}_{k}})$.
\end{proof}
\begin{thm}
A net $\{f_{\lambda}:\lambda\in \Lambda\}$ is an $I$-Cauchy net in $(Y^{X},\mathcal{U}_{k})$ where $\mathcal{U}_{k}$ is the uniformity of uniform convergence on compacta if and only if for each compact subset $K$ of $X$, $\{f_{\lambda}|_{K}:\lambda\in \Lambda\}$ is uniformly $I$-Cauchy in $(Y^{K},\mathcal{U}_{u})$ where $\mathcal{U}_{u}$ is the uniformity of uniform convergence on $Y^{K}$ and $I$ is a non-trivial ideal of $\Lambda$. 
\end{thm}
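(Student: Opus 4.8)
The plan is to mirror the proof of the preceding theorem (for $I$-convergence) but to phrase everything through the equivalent form $\mathbf{(ii)}$ of the $I$-Cauchy condition from Theorem 2.1, which replaces the ``witness index'' formulation by a statement about a single exceptional set $A\in I$ that behaves well under finite unions. The central bookkeeping fact I would reuse is the correspondence already recorded in the previous proof: for a compact $K\subset X$ and $U\in\mathcal{U}$, a pair $(f_{\alpha},f_{\beta})$ lies in the subbasic entourage $E_{K,U}$ of $\mathcal{U}_{k}$ on $Y^{X}$ if and only if the restricted pair $(f_{\alpha}|_{K},f_{\beta}|_{K})$ lies in the basic entourage $E_{U}$ of $\mathcal{U}_{u}$ on $Y^{K}$, since both assertions unwind to the single requirement that $(f_{\alpha}(x),f_{\beta}(x))\in U$ for every $x\in K$.

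For the forward implication I would fix a compact set $K$ and a base entourage $E_{U}$ of $\mathcal{U}_{u}$ on $Y^{K}$, and observe that its companion $E_{K,U}$ is a subbasic, hence an actual, member of $\mathcal{U}_{k}$. Applying Theorem 2.1$\mathbf{(ii)}$ to the $I$-Cauchy net $\{f_{\lambda}\}$ and the entourage $E_{K,U}$ produces a set $A\in I$ with $\alpha,\beta\notin A\Rightarrow (f_{\alpha},f_{\beta})\in E_{K,U}$; the correspondence above immediately upgrades this to $\alpha,\beta\notin A\Rightarrow (f_{\alpha}|_{K},f_{\beta}|_{K})\in E_{U}$. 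Because verifying condition $\mathbf{(ii)}$ on the base entourages $E_{U}$ suffices (any member of $\mathcal{U}_{u}$ contains some $E_{U}$, and the same $A$ then works), Theorem 2.1 gives that $\{f_{\lambda}|_{K}\}$ is uniformly $I$-Cauchy on $Y^{K}$, and $K$ was arbitrary.

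For the converse the only additional ingredient is that a base entourage of $\mathcal{U}_{k}$ is a finite intersection $\bigcap_{i=1}^{n}E_{K_{i},U_{i}}$ of subbasic ones. I would, for each $i$, invoke the hypothesis that $\{f_{\lambda}|_{K_{i}}\}$ is uniformly $I$-Cauchy on $Y^{K_{i}}$ and apply Theorem 2.1$\mathbf{(ii)}$ to the entourage $E_{U_{i}}$ to obtain $A_{i}\in I$ with $\alpha,\beta\notin A_{i}\Rightarrow (f_{\alpha}|_{K_{i}},f_{\beta}|_{K_{i}})\in E_{U_{i}}$, i.e.\ $(f_{\alpha},f_{\beta})\in E_{K_{i},U_{i}}$. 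Setting $A=\bigcup_{i=1}^{n}A_{i}$, the ideal axiom that $I$ is closed under finite unions gives $A\in I$, and for $\alpha,\beta\notin A$ one has $\alpha,\beta\notin A_{i}$ for every $i$, whence $(f_{\alpha},f_{\beta})\in\bigcap_{i=1}^{n}E_{K_{i},U_{i}}$. A final appeal to Theorem 2.1 (checking $\mathbf{(ii)}$ on base entourages of $\mathcal{U}_{k}$) yields that $\{f_{\lambda}\}$ is $I$-Cauchy on $Y^{X}$.

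I expect the forward direction to be essentially routine. The main point requiring care is the converse, specifically the passage from the subbase of $\mathcal{U}_{k}$ to its base: one must control the $I$-Cauchy condition against finite intersections of subbasic entourages, and it is precisely the closure of the ideal $I$ under finite unions, producing a single $A=\bigcup_{i}A_{i}\in I$ that simultaneously governs all the compacta $K_{1},\dots,K_{n}$, that makes this succeed. Working through Theorem 2.1$\mathbf{(ii)}$ rather than the raw Definition 2.3 is what keeps this combination of exceptional sets clean, since the $\mathbf{(ii)}$ formulation is symmetric in $\alpha,\beta$ and remains valid upon enlarging the exceptional set.
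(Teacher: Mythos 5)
Your proposal is correct and follows essentially the same route as the paper: the same restriction correspondence between $E_{K,U}$ on $Y^{X}$ and $E_{U}$ on $Y^{K}$, and the same use of Theorem 2.1$\mathbf{(ii)}$ with $A=\bigcup_{i=1}^{n}A_{i}$ to pass from the subbase of $\mathcal{U}_{k}$ to its base in the converse. The only cosmetic difference is that the paper runs the forward direction through the witness-index definition of $I$-Cauchy rather than form $\mathbf{(ii)}$, which changes nothing of substance.
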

\begin{proof}
Let $\{f_{\lambda}:\lambda\in \Lambda\}$ be an $I$-Cauchy net in $(Y^{X},\mathcal{U}_{k})$. Then for each subbasic element $E_{K,U}$ there exists some $\lambda_{0}\in \Lambda$ such that the set $\{\lambda\in \Lambda: (f_{\lambda},f_{\lambda_{0}})\notin E_{K,U}\}\in I$. This holds for each fixed compact subset $K$ of $X$ and for each $U\in \mathcal{U}$. Hence $\{f_{\lambda}|_{K}:\lambda\in \Lambda\}$ is uniformly $I$-Cauchy in $(Y^{K},\mathcal{U}_{u})$ for each compact subset $K$ of $X$.     \\
Conversely let $\{f_{\lambda}|_{K}:\lambda\in \Lambda\}$ be uniformly $I$-Cauchy in $(Y^{K},\mathcal{U}_{u})$ for each compact subset $K$ of $X$. Then for arbitrarily chosen basic elements $E_{K_{1},U_{1}}, E_{K_{2},U_{2}},\\ 
 \ldots, E_{K_{n},U_{n}}$ in $(Y^{K_{1}},\mathcal{U}_{u})$, $(Y^{K_{2}},\mathcal{U}_{u})$,\ldots, $(Y^{K_{n}},\mathcal{U}_{u})$ respectively we have by Theorem 2.1(\cite{DG}) there exist $A_{1},A_{2},.....,A_{n}\in I$ such that $\alpha,\beta\notin A_{i}$ implies $(f_{\alpha}|_{K_{i}},f_{\beta}|_{K_{i}})\in E_{K_{i},U_{i}}$ for each $i=1,2,\ldots,n$. We note below the following observation.  \\
Let $K$ be any compact subset of $X$, $U$ be any member of the uniformity $\mathcal{U}$ on $Y$. Now we see that if $E_{K,U}$ be a basic element for the uniformity $\mathcal{U}_{u}$ on $Y^{K}$ and a subbasic element for the uniformity $\mathcal{U}_{k}$ on $Y^{X}$, then for the first case mentioned $E_{K,U}=\{(f,g)\in Y^{k}\times Y^{K}:(f(x),g(x))\in U$ for each $x\in K\}\subset \{(f,g)\in Y^{X}\times Y^{X}:(f(x),g(x))\in U$ for each $x\in K\}=E_{K,U}$ mentioned for the second case.  \\
 Thus we obtain $(f_{\alpha}|_{K},f_{\beta}|_{K})\in E_{K,U}$ implies $(f_{\alpha},f_{\beta})\in E_{K,U}$. Let us say $A=\bigcup_{i=1}^{n}A_{i}$. Then clearly $A\in I$ and it follows from the observation made just before that $\alpha,\beta\notin A$ implies $(f_{\alpha},f_{\beta})\in E_{K_{i},U_{i}}$ for all $i=1,2,....,n$. Hence we can conclude that $\alpha,\beta\notin A$ implies $(f_{\alpha},f_{\beta})\in \bigcap_{i=1}^{n}E_{K_{i},U_{i}}$. Since $\bigcap_{i=1}^{n}E_{K_{i},U_{i}}$ is a basic element for the uniformity $\mathcal{U}_{k}$ on $Y^{X}$, so, it follows by Theorem 2.1(\cite{DG}) that $\{f_{\lambda}:\lambda\in \Lambda\}$ is an $I$-Cauchy net in $(Y^{X},\mathcal{U}_{k})$.
\end{proof}
We now recall the concept of a topological space to be a $k$-space which plays a central role in the discussion of both completeness and compactness relative to the uniformity of uniform convergence on compacta and its topology.
\begin{defn}(\cite{SW}) 
A topological space $(X,\tau)$ is a $k$-space (or a compactly generated space) iff the following condition holds:         \\
(a) $A\subset X$ is open in $(X,\tau)$ iff $A\cap K$ is open in $(K,\tau_{K})$ for each compact set $K$ in $(X,\tau)$.
\end{defn}
The $k$-spaces are important to our discussion of $I$-convergence of continuous functions on compacta because, in these spaces, the continuous functions are precisely those which behave well on compact subsets. The proof of the following lemma, which says this more precisely, follows easily in applying the definition of a $k$-space. 
\begin{lem} \cite{SW}
If $X$ is a $k$-space and $Y$ is a topological space, then $f:X\rightarrow Y$ is continuous iff $f|_{K}$ is continuous for each compact $K\subset X$.  
\end{lem}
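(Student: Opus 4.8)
The plan is to establish the two implications separately, observing that the $k$-space hypothesis is needed only for the sufficiency direction. For the necessity (the ``only if'' part), I would suppose $f:X\to Y$ is continuous and let $K\subset X$ be compact. Writing $\iota:(K,\tau_{K})\to (X,\tau)$ for the inclusion map, which is continuous since $\tau_{K}$ is the subspace topology, I observe that $f|_{K}=f\circ\iota$ is a composition of continuous maps and is therefore continuous. This half uses no special property of $X$.

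For the sufficiency (the ``if'' part), I would assume $f|_{K}$ is continuous for every compact $K\subset X$, and let $V$ be an arbitrary open subset of $Y$. The goal is to show that $f^{-1}(V)$ is open in $(X,\tau)$. The crucial set-theoretic identity is
\[
 f^{-1}(V)\cap K=(f|_{K})^{-1}(V)\qquad\text{for every compact }K\subset X,
\]
since both sides equal $\{x\in K: f(x)\in V\}$. Because $f|_{K}$ is continuous and $V$ is open in $Y$, the right-hand side is open in $(K,\tau_{K})$, and hence $f^{-1}(V)\cap K$ is open in $(K,\tau_{K})$ for each compact $K\subset X$.

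At this stage I would invoke condition (a) in the definition of a $k$-space: a subset $A$ of $X$ is open in $(X,\tau)$ precisely when $A\cap K$ is open in $(K,\tau_{K})$ for every compact $K$. Taking $A=f^{-1}(V)$, the preceding step verifies exactly the hypothesis of this criterion, so $f^{-1}(V)$ is open in $(X,\tau)$. As $V$ was an arbitrary open set in $Y$, it follows that $f$ is continuous, which completes the proof.

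There is no genuine obstacle here; the argument is a direct unwinding of the definitions, which is why the text remarks that it ``follows easily.'' The only point that deserves explicit attention is the identity $f^{-1}(V)\cap K=(f|_{K})^{-1}(V)$, since it is precisely what converts the assumed continuity of the restrictions into the openness of the traces $f^{-1}(V)\cap K$ that the $k$-space axiom requires. Everything else is the standard characterization of continuity via preimages of open sets.
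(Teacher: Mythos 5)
Your proof is correct and is exactly the routine argument the paper alludes to when it says the lemma ``follows easily in applying the definition of a $k$-space'': the paper cites Willard and gives no proof, and your two-step argument (composition with the inclusion for necessity, the identity $f^{-1}(V)\cap K=(f|_{K})^{-1}(V)$ plus the $k$-space criterion for sufficiency) is the standard one intended. No issues.
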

Using this result and theorems 2.8 and 2.9, which describes $I$-convergence on compacta as being precisely uniform $I$-convergence on each compact subset, the following theorem holds good.   \\
\begin{thm}
If $X$ is a $k$-space and $(Y,\mathcal{U})$ is an ideal complete uniform space, then $C(X,Y)$ is ideal complete in the uniformity of uniform convergence on compacta. 
\end{thm}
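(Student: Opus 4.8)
The plan is to take an arbitrary net $\{f_{\lambda}:\lambda\in\Lambda\}$ in $C(X,Y)$ that is $I$-Cauchy in the relative uniformity induced by $\mathcal{U}_{k}$, and to manufacture a continuous limit function to which it $I$-converges, so that ideal completeness holds by Definition 2.6. First I would observe, exactly as in the first step of the proof of Theorem 2.7(b), that an $I$-Cauchy net in a subspace is $I$-Cauchy in the ambient space; hence $\{f_{\lambda}\}$ is $I$-Cauchy in $(Y^{X},\mathcal{U}_{k})$. Applying Theorem 2.9, for every compact $K\subset X$ the restricted net $\{f_{\lambda}|_{K}\}$ is uniformly $I$-Cauchy in $(Y^{K},\mathcal{U}_{u})$. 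In particular, choosing $K=\{x\}$ (a singleton is compact in any space) shows that $\{f_{\lambda}(x)\}$ is $I$-Cauchy in $(Y,\mathcal{U})$ for each $x\in X$; since $(Y,\mathcal{U})$ is ideal complete, each such net $I$-converges, and I would fix a value $f(x)\in Y$ that is an $I$-limit of $\{f_{\lambda}(x)\}$ for every $x$, thereby defining a function $f\in Y^{X}$.

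Next I would show that $\{f_{\lambda}\}$ $I$-converges to this $f$ in $(Y^{X},\tau_{\mathcal{U}_{k}})$. Fix a compact $K\subset X$. The net $\{f_{\lambda}|_{K}\}$ is uniformly $I$-Cauchy in $(Y^{K},\mathcal{U}_{u})$ by the previous paragraph, and it is pointwise $I$-convergent to $f|_{K}$ directly by the definition of $f$, since $f_{\lambda}|_{K}(x)=f_{\lambda}(x)$ for $x\in K$. Therefore Theorem 2.6, applied with the set $K$ playing the role of $X$, yields that $\{f_{\lambda}|_{K}\}$ is uniformly $I$-convergent to $f|_{K}$ in $(Y^{K},\tau_{\mathcal{U}_{u}})$. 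Because this holds for every compact $K\subset X$, Theorem 2.8 then gives that $\{f_{\lambda}\}$ is $I$-convergent to $f$ in $(Y^{X},\tau_{\mathcal{U}_{k}})$. Note that this argument avoids any appeal to uniqueness of $I$-limits, since Theorem 2.6 builds uniform $I$-convergence out of the two already-verified conditions rather than by comparing two candidate limits.

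It remains to verify that $f\in C(X,Y)$, and I expect this to be the main obstacle, as it is where both continuity of an ideal limit and the $k$-space hypothesis enter. Fix a compact $K\subset X$. Each $f_{\lambda}|_{K}$ is continuous, so $\{f_{\lambda}|_{K}\}$ is a net lying in $C(K,Y)$ that is uniformly $I$-convergent to $f|_{K}$; by Theorem 42.10 of \cite{SW} (already invoked for Theorem 2.7(b)) the set $C(K,Y)$ is closed in $(Y^{K},\tau_{\mathcal{U}_{u}})$, and the closedness argument of Theorem 2.7(b), which rests on Theorem 3 of \cite{LD3} (an $I$-limit of a net contained in a set is adherent to that set), forces $f|_{K}\in C(K,Y)$, i.e.\ $f|_{K}$ is continuous. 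Since this holds for every compact $K$ and $X$ is a $k$-space, Lemma 2.10 gives that $f$ is continuous, so $f\in C(X,Y)$. Finally, as $\{f_{\lambda}\}$ lies in $C(X,Y)$ and $I$-converges to $f\in C(X,Y)$ in $(Y^{X},\tau_{\mathcal{U}_{k}})$, the same holds in the subspace topology of $C(X,Y)$ (open sets there being intersections of ambient open sets with $C(X,Y)$, and every $f_{\lambda}$ already lying in $C(X,Y)$); hence every uniformly-$I$-Cauchy-on-compacta net in $C(X,Y)$ $I$-converges, and $C(X,Y)$ is ideal complete in the uniformity of uniform convergence on compacta.
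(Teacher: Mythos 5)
Your proposal is correct, and its overall skeleton matches the paper's (pass to compacta via Theorem 2.9, conclude via Theorem 2.8 and the $k$-space lemma), but the construction of the limit function is genuinely different. The paper obtains, for each compact $K\subset X$, a limit $f_{K}\in C(K,Y)$ directly from the ideal completeness of $(C(K,Y),\mathcal{U}_{u})$ established in Theorem 2.7(b), and then glues these into a single $f$ via the compatibility claim $f_{K_{2}}|_{K_{1}}=f_{K_{1}}$ for $K_{1}\subset K_{2}$ --- a step the paper dismisses as easily seen, but which implicitly presupposes uniqueness of $I$-limits (hence a separated $Y$) or at least a consistent simultaneous choice of limits. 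You instead define $f$ pointwise, using singleton compacta and the ideal completeness of $(Y,\mathcal{U})$, then upgrade pointwise $I$-convergence plus uniform $I$-Cauchyness to uniform $I$-convergence of $\{f_{\lambda}|_{K}\}$ on each compact $K$ via Theorem 2.6, and finally recover continuity of $f|_{K}$ from the closedness of $C(K,Y)$ in $(Y^{K},\tau_{\mathcal{U}_{u}})$ together with the adherence argument of Theorem 2.7(b). What this buys you is a proof with no well-definedness or gluing issue and no hidden appeal to uniqueness of $I$-limits; the cost is that you must invoke Theorem 2.6 and the closedness of $C(K,Y)$ explicitly rather than quoting Theorem 2.7(b) as a black box. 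Both arguments then finish identically: Theorem 2.8 yields $I$-convergence in $(Y^{X},\tau_{\mathcal{U}_{k}})$, Lemma 2.10 yields continuity of $f$ from the $k$-space hypothesis, and convergence passes to the subspace $C(X,Y)$.
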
 
\begin{proof}
 At first from Theorem 2.9 we know that if $\{f_{\lambda}:\lambda\in \Lambda\}$ is an $I$-Cauchy net in $(Y^{X},\mathcal{U}_{k})$ where $\mathcal{U}_{k}$ is the uniformity of uniform convergence on compacta then we have $\{f_{\lambda}|_{K}:\lambda\in \Lambda\}$ is uniformly $I$-Cauchy in $(Y^{K},\mathcal{U}_{u})$ for each compact subset $K\subset X$, where $\mathcal{U}_{u}$ is the uniformity of uniform convergence on $Y^{K}$ and $I$ is a non-trivial ideal of $\Lambda$. On the other hand from Theorem 2.7 we know that for each compact $K\subset X$, $C(K,Y)$ is an ideal complete subspace of $Y^{K}$ in the uniformity of uniform convergence. Now let $\{f_{\lambda}:\lambda\in \Lambda\}$ be an $I$-Cauchy net in $(C(X,Y),\mathcal{U}_{k})$. Then $\{f_{\lambda}|_{K}:\lambda\in \Lambda\}$ is uniformly $I$-Cauchy in $(C(K,Y),\mathcal{U}_{u})$ for each compact $K\subset X$. Since $(C(K,Y),\mathcal{U}_{u})$ is ideal complete, so, a continuous uniform $I$-limit $f_{K}:K\rightarrow Y$ exists for each compact $K\subset X$. It can be seen easily that if $K_{1}\subset K_{2}\subset X$ then $f_{K_{2}}|_{K_{1}}=f_{K_{1}}$, and from this it follows that the function $f:X\rightarrow Y$ defined by $f(x)=f_{K}(x)$ for $x\in K$, is well defined. It is continuous by above lemma 2.1, and since $\{f_{\lambda}|_{K}:\lambda\in \Lambda\}$ is uniformly $I$-convergent to $f|_{K}$ in $(Y^{K},\tau_{\mathcal{U}_{u}})$ for each compact $K\subset X$, so, by Theorem 2.8 it follows that $\{f_{\lambda}:\lambda\in \Lambda\}$ is $I$-Convergent to $f$ in $(Y^{X},\mathcal{U}_{k})$. Hence we get that $\{f_{\lambda}:\lambda\in \Lambda\}$ is $I$-Convergent to $f$ in $(C(X,Y),\mathcal{U}_{k})$. Thus $(C(X,Y),\mathcal{U}_{k})$ is ideal complete.
\end{proof}
\noindent\textbf{Acknowledgement:} The second author is thankful to The University Grants Commission, Government of India for giving the award of Senior Research Fellowsip during the tenure of preparation of this research paper.

\end{document}